\numberwithin{equation}{section}
\DeclareFontFamily{OT1}{rsfs}{}
\DeclareFontShape{OT1}{rsfs}{n}{it}{<-> rsfs10}{}
\DeclareMathAlphabet{\mathscr}{OT1}{rsfs}{n}{it}
\theoremstyle{plain}
\newtheorem{theorem}{Theorem}[section]
\newtheorem{proposition}[theorem]{Proposition}
\newtheorem{lemma}[theorem]{Lemma}
\theoremstyle{definition}
\newtheorem{remark}[theorem]{Remark}
\newcommand\E{\mathbb{E}}
\newcommand\R{\mathbb{R}}
\newcommand\Z{\mathbb{Z}}
\begin{document}

\title[Optimal sine and sawtooth inequalities]{Optimal
sine and sawtooth inequalities }

\author{Louis Esser}
\address{UCLA Department of Mathematics, Los Angeles, CA 90095-1555.}
\email{esserl@math.ucla.edu}

\author{Terence Tao}
\address{UCLA Department of Mathematics, Los Angeles, CA 90095-1555.}
\email{tao@math.ucla.edu}

\author{Burt Totaro}
\address{UCLA Department of Mathematics, Los Angeles, CA 90095-1555.}
\email{totaro@math.ucla.edu}

\author{Chengxi Wang}
\address{UCLA Department of Mathematics, Los Angeles, CA 90095-1555.}
\email{chwang@math.ucla.edu}


\subjclass[2010]{42A05 (Primary) 11K06; 14E30; 14J40; 26D05 (Secondary)}

\begin{abstract}
We determine the optimal inequality
of the form $\sum_{k=1}^m a_k\sin kx\leq 1$, in the sense
that $\sum_{k=1}^m a_k$ is maximal.
We also solve exactly the analogous problem
for the sawtooth (or signed fractional part) function.
Equivalently, we solve
exactly an optimization problem about equidistribution on the unit circle.
\end{abstract}

\maketitle


In this paper, we determine the optimal inequality
of the form
$$\sum_{k=1}^m a_k\sin kx \leq 1$$
for each positive integer $m$, in the sense that $\sum_{k=1}^m a_k$
is maximal (Theorem \ref{sine}). Namely, $\sum_{k=1}^m a_k$ is on the order
of $\log m$, and we compute it exactly. This is a natural extremal problem
in Fourier analysis.

We also solve the analogous optimization problem
for the {\it sawtooth }(or {\it signed
fractional part}) function
$g(x)=x+\bigl\lfloor \frac{1}{2}-x\bigr\rfloor$,
which takes values in $(-1/2,1/2]$. Namely, we find
an optimal inequality of the form
$$\sum_{k=1}^m b_k\, g(kx) \leq 1$$
for each positive integer $m$, in the sense that
$\sum_{k=1}^mb_k$ is maximal (Theorem \ref{sawtooth}).
See the figures
in sections \ref{sectionsawtooth} and \ref{sectionsine}
for examples of these inequalities,
which show striking cancellation among dilated sine
or sawtooth functions.

By linear programming duality, these inequalities are equivalent
to statements about equidistribution on the unit circle.
In particular: for each positive integer $m$
and every probability measure on the real line,
at least one of the dilated sawtooth functions $g(kx)$
for $k\in \{1,\ldots,m\}$
must have small expected value,
and we determine the optimal bound in terms of $m$ (Theorem
\ref{sawtooth}). It is on the order of $1/\log m$, and
we compute it exactly.

These results were motivated by an application
to algebraic geometry. For smooth complex
projective varieties of general type, the {\it volume }is a positive
rational number that measures the asymptotic growth
of the plurigenera $h^0(X,mK_X)$.
Before the authors' series
of papers in 2021, the varieties
of general type with smallest known volume in high dimensions $n$
were those found by Ballico, Pignatelli, and Tasin,
with volume roughly $1/n^n$ \cite{BPT}.

Using our equidistribution result for the sawtooth function,
for any constant $c<1$, version 1 of this paper on the arXiv
constructed varieties
of general type in all sufficiently high dimensions $n$
with volume less than $1/e^{c n^{3/2}(\log n)^{1/2}}$.
The equidistribution
result was used to optimize the constant $c$.

Three of the authors then went further by different methods,
finding $n$-dimensional varieties of general type
with volume less than $1/2^{2^{n/2}}$ \cite{ETW}.
In view of that improvement,
we have omitted the algebro-geometric
application from this paper. There should be other ways
to apply our optimization results
for the sine and sawtooth functions.

\subsection{Acknowledgments}

LE and BT were supported by NSF grant DMS-2054553.  TT was supported by a Simons Investigator grant, the James and Carol Collins Chair, the Mathematical Analysis \& Application Research Fund Endowment, and by NSF grant DMS-1764034.
Thanks to John Ottem, Sam Payne, and Miles Reid for their suggestions.


\section{Dilated fractional parts of a random real number}
\label{sectionsawtooth}

In this section, we prove an optimal inequality for the sawtooth
function (Theorem \ref{sawtooth}). By linear programming duality,
this is equivalent to an optimal bound in a problem
about equidistribution on the unit circle.

For a real number $x$, let $\lfloor x \rfloor$ denote the greatest integer less than or equal to $x$ and $\lceil x \rceil$ the smallest integer greater than or equal to $x$.  Note that
\begin{equation}\label{lf}
 \lfloor x+n \rfloor = \lfloor x\rfloor + n \;\text{ and }\;
\lceil x+n \rceil = \lceil x\rceil + n
\end{equation}
for any integer $n$, and that
\begin{equation}\label{cc}
\lceil x \rceil = - \lfloor -x\rfloor.
\end{equation}
We also define the lower fractional part
\begin{equation}\label{lfrac}
\{x\} \coloneqq x - \lfloor x \rfloor
\end{equation}
which takes values in $[0,1)$, and the upper fractional part
\begin{equation}\label{ufrac}
\{x\}^* \coloneqq x - \lceil x \rceil + 1
\end{equation}
which takes values in $(0,1]$.  Finally, define the signed fractional part
\begin{equation}\label{g-def}
g(x) \coloneqq x - \bigl\lceil x-\tfrac{1}{2} \bigr\rceil = x + \bigl\lfloor \tfrac{1}{2} - x \bigr\rfloor
\end{equation}
which takes values in $(-1/2,1/2]$. We call $g(x)$
the \emph{sawtooth function}.

For a (Borel) probability measure $\mu$ on the reals
and a positive integer $k$, define the expectation
$$ \E_\mu g(k x) \coloneqq \int_\R g(kx)\ d\mu(x).$$
Consider the quantity
\begin{equation}\label{gk}
 \min_{1 \leq k \leq m} \E_\mu g(k x),
\end{equation}
where $m$ is a natural number and $\mu$ is a probability measure on the reals.
Since each function $g(kx)$ is pointwise bounded by $1/2$, we trivially have the bound
\begin{equation}\label{minr}
 \min_{1 \leq k \leq m} \E_\mu g(k x) \leq \tfrac{1}{2},
\end{equation}
but one expects to do better as $m$ gets large.  For instance, from the Dirichlet approximation theorem one sees that
$$ \min_{1 \leq k \leq m} |g(kx)| \leq \frac{1}{m+1},$$
but this does not directly allow one to improve the bound \eqref{minr} since one cannot interchange the minimum and the expectation.  As it turns out, there is an improvement in $m$, and the optimal value of \eqref{gk} can be computed exactly, but it only decays like $\frac{1}{\log m}$ rather than $\frac{1}{m}$ as $m \to \infty$.

As a first attempt to control the quantity \eqref{gk}, one could try to estimate it by its unweighted mean
$$ \frac{1}{m} \sum_{k=1}^m \E_\mu g(k x).$$
However, this quantity can be quite large: in particular, if $\mu$ is the Dirac mass at $1/2$, then this mean is equal to $\frac{\lceil m/2 \rceil}{2m}$, which is asymptotic to $\frac{1}{4}$ as $m \to \infty$.  Closely related to this is the observation that the unweighted sum
$$ g(x) + g(2x) + \dots + g(mx)$$
of the $g(jx)$ can be much larger than $1$, and in particular equal to $\lceil m/2\rceil / 2$ when $x=1/2$.

However, one can obtain much better results by working with \emph{weighted} means of the $\E_\mu g(kx)$, or equivalently by \emph{weighted} linear combinations of the $g(jx)$; indeed by linear programming duality, we see that a bound of the form
$$  \min_{1 \leq k \leq m} \E_\mu g(k x) \leq \lambda$$
for all $\mu$ holds if and only if there exist non-negative coefficients $a_1,\dots,a_m$ with $\sum_{j=1}^m a_j \geq \frac{1}{\lambda}$ such that we have the dual inequality
$$ \sum_{k=1}^m a_k g(kx) \leq 1$$
for all $x$.  Thus to compute the minimal value of \eqref{gk}, we just need to find an optimal dual inequality.  

We begin with the model case where $m$ is a power of two, in which the dual inequality is particularly easy to establish.

\begin{proposition}
  Let $r$ be a natural number, and set $m = 2^r$.
\begin{itemize}
\item[(i)] For every real number $x$, we have
\begin{equation}\label{3r-easy}
2g(x) + \sum_{j=1}^r g(2^j x) \leq 1.
\end{equation}
\item[(ii)] We have
\begin{equation}\label{4r-easy}
\min_{1 \leq k \leq m} \E_\mu g(k x)  \leq \frac{1}{r+2}
\end{equation}
for every (Borel) probability measure $\mu$ on the real line. Moreover,
this is the optimal bound: equality is attained for the measure $\mu_m$
with mass $\frac{1}{r+2}$ at each of the numbers
$\frac{1}{2}, \frac{1}{4}, \frac{1}{8}, \ldots, \frac{1}{2^{r}}$,
and mass $\frac{2}{r+2}$ at $\frac{1}{2^{r+1}}$.
\end{itemize}
\end{proposition}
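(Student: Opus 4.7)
My plan is to prove (i) via a telescoping doubling identity with a $\{-1,0,1\}$-valued carry, and then to derive (ii) almost immediately: the upper bound comes from averaging (i), while the optimality of $\mu_m$ reduces to a clean combinatorial identity provable by induction.

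For part (i), I start from the elementary identity $g(2y) = 2g(y) - \chi(y)$, where $\chi(y) \in \{-1,0,1\}$ equals $+1$ when $g(y) \in (1/4,1/2]$, $-1$ when $g(y) \in (-1/2,-1/4]$, and $0$ otherwise (verified by writing $y = n + g(y)$ and splitting into three cases). Iterating and telescoping yields
\[
2g(x) + \sum_{j=1}^r g(2^j x) \;=\; 2g(2^r x) + \sum_{j=0}^{r-1} \chi(2^j x).
\]
The crux is bounding the right-hand side by $1$. The key observation is that $\chi(2^j x) = +1$ forces $g(2^{j+1}x) \in (-1/2, 0]$, so $\chi(2^{j+1}x) \neq +1$, and symmetrically $\chi(2^j x) = -1$ forces $g(2^{j+1}x) \in (0, 1/2]$, so $\chi(2^{j+1}x) \neq -1$; hence the nonzero $\chi$'s alternate in sign, and $C := \sum_{j=0}^{r-1}\chi(2^j x) \in \{-1,0,1\}$. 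When $C = 1$ the last nonzero $\chi$ is a $+1$, and all subsequent $\chi_j = 0$ steps (which require $g \in (-1/4,1/4]$) preserve $g(2^j x) \leq 0$, forcing $g(2^r x) \leq 0$ and thus $2g(2^r x) + C \leq 1$; the cases $C \in \{0,-1\}$ are similar or easier.

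For part (ii), the upper bound follows by taking $\mu$-expectations of (i): the left-hand side is at least $(r+2)\min_{k \in \{1,2,4,\ldots,2^r\}} \E_\mu g(kx)$, which must therefore be at most $1$. For the optimality of $\mu_m$, I will in fact prove the stronger statement $\E_{\mu_m} g(kx) = 1/(r+2)$ for \emph{every} $k \in \{1,\ldots,2^r\}$. Unwinding the definition of $\mu_m$, this amounts to the identity
\[
\sum_{j=1}^r g(k/2^j) + 2 g(k/2^{r+1}) = 1 \qquad (1 \leq k \leq 2^r).
\]
Since $k/2^{r+1} \in (0,1/2]$ the last summand equals $k/2^r$, and expanding $g(y) = y - \lceil y - \tfrac{1}{2}\rceil$ reduces the identity to the combinatorial statement $\sum_{j=1}^r \lceil k/2^j - \tfrac{1}{2}\rceil = k - 1$. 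I prove this by induction on $k$: the sum vanishes at $k = 1$, and incrementing $k$ to $k+1$ changes exactly one term of the sum, namely the one at $j = v_2(k)+1$ (where $v_2$ denotes the $2$-adic valuation), and increases it by $1$; the constraint $1 \leq k < 2^r$ guarantees $v_2(k) + 1 \leq r$, so the increment is always in range.

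The main obstacle I expect is the $\chi$-alternation analysis in part (i): the telescoping identity by itself is insufficient, since $\sum \chi_j$ could in principle be as large as $r$, and it is the specific sign-alternation forced by the doubling rule that collapses this to a bounded quantity. Once (i) is in hand, the expectation computations in (ii) are essentially routine.
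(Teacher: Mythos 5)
Your proof is correct, but it takes a genuinely different route from the paper's in both parts, so a comparison is worthwhile.  For part (i), both arguments are telescoping, but the building blocks differ.  The paper uses $g(y) = \{2y\}^* - \{y\}^*$ together with $2g(y) - \{2y\}^* = -1_{\{y\}^* > 1/2}$ (where $\{y\}^*$ is the upper fractional part), which collapses the entire left side of \eqref{3r-easy} to the single closed expression $\{2^{r+1}x\}^* - 1_{\{x\}^* > 1/2}$, which is transparently at most $1$.  You instead iterate the doubling rule $g(2y) = 2g(y) - \chi(y)$ with a $\{-1,0,1\}$-valued carry, arriving at the (necessarily equal) expression $2g(2^r x) + \sum_{j=0}^{r-1}\chi(2^j x)$; this is not manifestly bounded, and you must then argue that the nonzero carries alternate in sign (so the carry sum lies in $\{-1,0,1\}$) and that when it equals $+1$ the quantity $g(2^r x)$ is forced to be nonpositive.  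Your alternation analysis is right, but it requires more case-checking than the paper's one-line conclusion.  For the sharpness in part (ii), both you and the paper verify $\sum_{j=1}^r g(k/2^j) + 2g(k/2^{r+1}) = 1$ for $1\le k\le 2^r$, but the paper does so by plugging $x = k/2^{r+1}$ into its already-established closed form (giving $\{k\}^* - 1_{\{k/2^{r+1}\}^* > 1/2} = 1 - 0$), whereas you reduce to the combinatorial statement $\sum_{j=1}^r \lceil k/2^j - \tfrac12\rceil = k-1$ and prove it by induction on $k$, tracking that exactly one term (at $j = v_2(k)+1$) increments.  Both are sound; the paper's proof of part (ii) is shorter because it reuses the telescoping formula, while yours is self-contained and somewhat more combinatorial in flavor.
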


\begin{proof}
We begin with (i).  We observe the identity
\begin{equation}\label{tele}
 g(x) = \{2x\}^* - \{x\}^*
\end{equation}
for all real numbers $x$, since both sides of this equation are $1$-periodic, equal to $x$ on $(0,1/2]$, and equal to $x-1$ on $(1/2,1]$. Similarly,
$$ 2g(x) - \{2x\}^* = - 1_{\{x\}^* > 1/2},$$
where the indicator function $1_{\{x\}^* > 1/2}$ is defined to equal $1$ when $\{x\}^* > 1/2$ and vanish otherwise.
Thus we have the telescoping formula
\begin{equation}\label{telescoping}
 2g(x) + \sum_{j=1}^r g(2^j x) = \{2^{r+1} x\}^* - 1_{\{x\}^* > 1/2}
\end{equation}
(see Figure \ref{fig:combination2}.)  This establishes (i).

Now we prove (ii).  Integrating (i) against an arbitrary probability measure $\mu$ on $\R$, we conclude that
$$
2 \E_\mu g(x) + \sum_{j=1}^r \E_\mu g(2^j x) \leq 1.
$$
Thus we have
$$ \biggl(2 + \sum_{j=1}^r 1\biggr)
\min_{1 \leq k \leq m} \E_\mu g(k x)  \leq 1,$$
which gives the upper bound in \eqref{4r-easy}.

To establish the matching lower bound using the measure $\mu_m$, it suffices to show that
$$ \sum_{j=1}^{r} g\biggl( \frac{k}{2^j}\biggr) + 2 g\biggl( \frac{k}{2^{r+1}} \biggr) = 1$$
for all $k=1,\dots,m$.  But from \eqref{telescoping} with $x = k/2^{r+1}$, the left-hand side is equal to
$$ \{k\}^* - 1_{\{k/2^{r+1}\}^* > 1/2} = 1 - 0,$$
giving the claim.
\end{proof}

\begin{figure} [t]
\centering
\includegraphics[width=4in]{./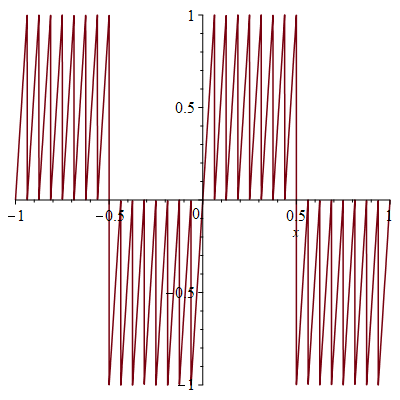}
\caption{The left-hand side of \eqref{3r-easy} when $m=8$.  We are showing that it is at most $1$.  Note the large number of locations where the bound of $1$ is attained, including the support $\{1/2,1/4,1/8,1/16\}$ of the optimal measure $\mu_8$.}
\label{fig:combination2}
\end{figure}

Now we handle the general case.

\begin{theorem}
\label{sawtooth}
  Let $r,m$ be natural numbers such that $2^r < m \leq 2^{r+1}$.  
\begin{itemize}
\item[(i)] For every real number $x$, we have
\begin{equation}\label{3r}
2g(x) + \sum_{j=1}^r g(2^j x) +  \sum_{\ell = 2^r+1}^m \frac{m}{\ell(\ell-1)}
\biggl( g(\ell x) + g((2^{r+1}+1-\ell)x)-g(x)\biggr) \leq 1.
\end{equation}
\item[(ii)] We have
\begin{equation}\label{4r}
\min_{1 \leq k \leq m} \E_\mu g(k x)  \leq \frac{2^r}{(r+1)2^r + m}
\end{equation}
for every (Borel) probability measure $\mu$ on the real line. Moreover,
this is the optimal bound: equality is attained for the measure $\mu_m$
with mass $\frac{2^r}{(r+1)2^r+m}$ at each of the numbers
$\frac{1}{2}, \frac{1}{4}, \frac{1}{8}, \ldots, \frac{1}{2^{r+1}}$
and mass $\frac{m}{(r+1)2^r+m}$ at $\frac{1}{2m}$.
\end{itemize}
\end{theorem}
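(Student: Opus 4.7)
The plan is to establish the pointwise inequality (i) as the main technical statement; part (ii) then follows by standard linear-programming duality, mirroring the proof of the preceding proposition. As a sanity check, the coefficients in \eqref{3r} sum to the reciprocal of the bound in \eqref{4r}: partial fractions $\frac{1}{\ell(\ell-1)}=\frac{1}{\ell-1}-\frac{1}{\ell}$ telescope the tail as $\sum_{\ell=2^r+1}^{m}\frac{m}{\ell(\ell-1)}=\frac{m}{2^r}-1$, so the total coefficient on $g(kx)$ across all $k$ in \eqref{3r} is $2+r+\bigl(\frac{m}{2^r}-1\bigr)=\frac{(r+1)2^r+m}{2^r}$.

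For (i), the first step is to apply the telescoping identity \eqref{telescoping} from the proposition, replacing $2g(x)+\sum_{j=1}^r g(2^j x)$ by $\{Nx\}^*-1_{\{x\}^*>1/2}$ (with $N:=2^{r+1}$). The claim reduces to
\[
\{Nx\}^*-1_{\{x\}^*>1/2}+\sum_{\ell=2^r+1}^{m}\frac{m}{\ell(\ell-1)}\bigl(g(\ell x)+g((N+1-\ell)x)-g(x)\bigr)\leq 1.
\]
By $1$-periodicity and the antisymmetry of each $g(kx)$ (away from half-integers), it suffices to bound the LHS on a half-period, which I would partition into the innermost strip $[0,1/N]$ together with the dyadic strips $(1/2^{j+1},1/2^j]$ for $j=1,\ldots,r$.

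On $[0,1/(2m)]$ every relevant dilate $kx$ with $1\leq k\leq m$ lies in $[0,1/2]$, so $g(kx)=kx$, and a brief computation collapses the LHS to $2mx$, which is at most $1$ with equality at $x=1/(2m)$. On $(1/(2m),1/N]$, setting $L=\lfloor 1/(2x)\rfloor$, one has $g(\ell x)=\ell x$ for $\ell\leq L$ and $g(\ell x)=\ell x-1$ for $\ell>L$; another application of partial fractions collapses the LHS to $1+2mx-m/L$, which is at most $1$ with equality whenever $1/(2x)$ is an integer. For the larger strips $(1/N,1/2]$ I would carry out analogous but more intricate case analysis: within each strip the dilates $\ell x$ cross $1/2\bmod 1$ at prescribed values of $\ell$, and the pair sum can again be reduced by partial-fractions telescoping to an explicit piecewise-linear expression. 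A more conceptual alternative is to exploit the identity $g(a)+g(b)-g(a+b)\in\{-1,0,1\}$ with $a=\ell x$, $b=(N+1-\ell)x$, rewriting the pair sum as $(m/2^r-1)(g((N+1)x)-g(x))+\sum_\ell \frac{m}{\ell(\ell-1)}n_\ell$ for integers $n_\ell\in\{-1,0,1\}$ and thus reducing matters to controlling a discrete step-function sum.

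Given (i), part (ii) follows: integrating \eqref{3r} against an arbitrary probability measure $\mu$ gives $\frac{(r+1)2^r+m}{2^r}\min_k\E_\mu g(kx)\leq 1$, which is the upper bound in \eqref{4r}. For the matching lower bound it suffices to verify that the LHS of \eqref{3r} equals $1$ at each support point of $\mu_m$. At the dyadic points $1/2^{j+1}$ for $j=0,\ldots,r$ the telescoping part contributes $\{Nx\}^*-1_{\{x\}^*>1/2}=1$, and a short fractional-part computation shows that the extra pair sum vanishes; the remaining support point $1/(2m)$ is handled by the innermost-strip analysis above. The main obstacle is the case analysis for (i) on the larger strips $(1/N,1/2]$, where tracking the piecewise behaviour of $g(\ell x)$ across the full range of $\ell$ and verifying that the resulting expression remains $\leq 1$ will require careful bookkeeping.
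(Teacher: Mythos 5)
Your opening moves match the paper's: you begin with the telescoping identity \eqref{telescoping}, you see that \eqref{4r} follows from \eqref{3r} by integration, and you correctly compute that the coefficients in \eqref{3r} sum to $\frac{(r+1)2^r+m}{2^r}$. Your explicit worked-out verification on the two innermost strips $(0,\frac{1}{2m}]$ and $(\frac{1}{2m},\frac{1}{2^{r+1}}]$ is correct: on the first strip the left side collapses to $2mx$, and on the second it collapses to $1+2mx-m/\lfloor 1/(2x)\rfloor \leq 1$. So far, so good.

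However, there is a genuine gap in part (i). You acknowledge yourself that the bulk of the work lies in the region $x \in (1/2^{r+1}, 1/2]$, and you have not shown how to close it. The proposal to work strip-by-strip in the dyadic intervals $(1/2^{j+1},1/2^j]$ runs into a scaling problem: as $j$ decreases the dilates $\ell x$ for $\ell$ in $(2^r,m]$ wrap around $\Z$ many times, and tracking the resulting crossings of $1/2 \bmod 1$ becomes unwieldy. The paper's resolution is different and more quantitative: after reducing (via floor and fractional-part identities) to the inequality \eqref{test}, one bounds $\frac{1}{\ell(\ell-1)} = \int_{\ell-1}^\ell t^{-2}\,dt$ by an integral and then parametrizes $x = (j-\theta)/2^{r+1}$ by the integer $j = \lceil 2^{r+1}x\rceil$ rather than by dyadic strips. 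This yields four explicit small cases ($j=1,2,3,4$) plus one uniform argument for $j>4$ exploiting the monotonicity of $t^{-2}$. Your alternative suggestion, writing $g(\ell x) + g((2^{r+1}+1-\ell)x)-g(x)$ as $g((2^{r+1}+1)x)-g(x)+n_\ell$ with $n_\ell \in \{-1,0,1\}$, is algebraically correct but just repackages the same bookkeeping: controlling $\sum_\ell \frac{m}{\ell(\ell-1)} n_\ell$ requires precisely the count of how many $\ell$ produce $n_\ell \neq 0$, which is the content of \eqref{test}. Without the integral comparison (or an equivalent device), neither proposed route is closed.

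There is also a smaller but real logical error in part (ii). You claim that to show $\mu_m$ attains the bound it ``suffices to verify that the LHS of \eqref{3r} equals $1$ at each support point of $\mu_m$.'' That verifies tightness of the \emph{dual} constraint, which gives $\sum_k a_k \E_{\mu_m} g(kx) = 1$ --- but that is a \emph{weighted average} of the values $\E_{\mu_m} g(kx)$, and does not prevent some of them from dropping strictly below $\frac{2^r}{(r+1)2^r+m}$. What you actually need (and what the paper proves) is the \emph{primal} tightness: a direct computation that
$$\sum_{j=1}^{r+1} g\Bigl(\frac{k}{2^j}\Bigr)+\frac{m}{2^r}\,g\Bigl(\frac{k}{2m}\Bigr)=1 \quad \text{for every } k=1,\dots,m,$$
which follows from the telescoping identity $\sum_{j=1}^{r+1} g(k/2^j) = \{k\}^* - \{k/2^{r+1}\}^* = 1 - k/2^{r+1}$ together with $g(k/2m) = k/2m$. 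Notice the roles of $k$ and of the support points $p$ are interchanged relative to what you wrote: one must fix each $k$ and sum over $p$, not fix each $p$ and sum over $k$.
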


In particular, the right side of (ii) is less than $\frac{\log 2}{\log m}$.
So (ii) says in particular: for every probability measure $\mu$
on the real line and every positive integer $m$, there is
a positive integer $k$ at most $m$ such that the expected value
$\E_{\mu} g(kx)$ is at most $\frac{\log 2}{\log m}$. This
is an equidistribution statement, sharpening
the rough idea that the image measure of $\mu$ on $\R/\Z$ 
under multiplication by
some not-too-large positive integer is not concentrated too much
in the first half of $[0,1]$.

It follows from our argument (in particular the properties
of the measure $\mu_m$ in (ii))
that statement (i) is an optimal inequality of the form
$\sum_{k=1}^m a_k g(kx)\leq 1$, in the sense that it has
the maximal value of $\sum a_k$ (namely, $r+1+\frac{m}{2^r}$)
among all inequalities of this form. See Remark \ref{non-unique}
on the non-uniqueness of this inequality.

\begin{figure} [t]
\centering
\includegraphics[width=4in]{./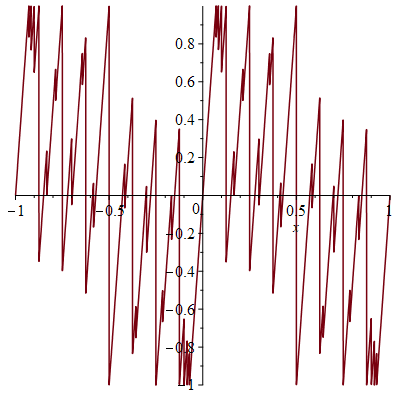}
\caption{The left-hand side of \eqref{3r} when $m=7$. Note the large number of locations where the bound of $1$ is attained, including the support $\{1/2,1/4,1/8,1/14\}$ of the optimal measure $\mu_7$.}
\label{fig:combination}
\end{figure}

\begin{proof}  
By \eqref{telescoping}, we can rearrange \eqref{3r} as
$$ \sum_{\ell = 2^r+1}^m \frac{m}{\ell(\ell-1)}
\biggl( g(\ell x) + g((2^{r+1}+1-\ell)x) - g(x)\biggr) \leq 1 - \{2^{r+1} x\}^* + 1_{\{x\}^* > 1/2}.$$
Both sides are $1$-periodic, so we may assume that $x \in (0,1]$, and thus we may write
\begin{equation}\label{jdef}
 x = \frac{j-\theta}{2^{r+1}}
\end{equation}
for some integer $1 \leq j \leq 2^{r+1}$ and some real number $0 \leq \theta < 1$.  In particular,
we have $\{2^{r+1} x\}^* = 1-\theta$.  From \eqref{g-def} we have
$$ g(\ell x) + g((2^{r+1}+1-\ell)x) - g(x)  = 2^{r+1} x + \bigl\lfloor \tfrac{1}{2} - \ell x \bigr\rfloor + \bigl\lfloor \tfrac{1}{2} + \ell x - 2^{r+1} x - x \bigr\rfloor - \bigl\lfloor \tfrac{1}{2} - x\bigr\rfloor.$$
But $-\lfloor \frac{1}{2} - x \rfloor = 1_{\{x\}^* > 1/2}$
since $x\in (0,1]$, so we have
$$ \sum_{\ell = 2^r+1}^m \frac{m}{\ell(\ell-1)} \bigl( -\bigl\lfloor \tfrac{1}{2} - x \bigr\rfloor \bigr) \leq 1_{\{x\}^* > 1/2},$$
since 
\begin{equation}\label{aum}
\sum_{\ell = 2^r+1}^m \frac{m}{\ell(\ell-1)} = m \biggl(\frac{1}{2^r}-\frac{1}{m}\biggr)
\leq 1.
\end{equation}
So it will suffice to establish the inequality
$$ \sum_{\ell = 2^r+1}^m \frac{m}{\ell(\ell-1)} \biggl( 2^{r+1} x + \bigl\lfloor \tfrac{1}{2} - \ell x \bigr\rfloor + \bigl\lfloor \tfrac{1}{2} + \ell x - 2^{r+1} x - x \bigr\rfloor \biggr) \leq \theta.$$
Writing $2^{r+1} x = j-\theta$, we can rearrange this using \eqref{lf} as
$$ \sum_{\ell = 2^r+1}^m \frac{m}{\ell(\ell-1)} \biggl( -\theta + \bigl\lfloor \tfrac{1}{2} - \ell x \bigr\rfloor + \bigl\lfloor \tfrac{1}{2} + \ell x + \theta - x \bigr\rfloor \biggr) \leq \theta.$$
By \eqref{aum}, it is equivalent
to show that
$$ \sum_{\ell = 2^r+1}^m \frac{m}{\ell(\ell-1)} \biggl( \bigl\lfloor \tfrac{1}{2} - \ell x \bigr\rfloor + \bigl\lfloor \tfrac{1}{2} + \ell x + \theta - x \bigr\rfloor\biggr) \leq \frac{m \theta}{2^r}.$$
We may cancel the factor of $m$ on both sides. The quantity $\lfloor \frac{1}{2} - \ell x \rfloor + \lfloor \frac{1}{2} + \ell x + \theta - x \rfloor$ is clearly an integer that is bounded above by
$$
\bigl(\tfrac{1}{2} - \ell x\bigr) + \bigl(\tfrac{1}{2} + \ell x + \theta - x\bigr) = 1 + \theta - x < 2,$$
and by \eqref{lfrac} it is equal to $1$ if and only if
$$ \bigl\{ \tfrac{1}{2} - \ell x \bigr\} + \bigl\{ \tfrac{1}{2} + \ell x + \theta - x \bigr\} = \theta - x,$$
so in particular
$$ 0 \leq \bigl\{ \tfrac{1}{2} - \ell x \bigr\} \leq \theta - x.$$
Thus it will suffice to show that 
\begin{equation}\label{test}
 \sum_{2^r < \ell \leq m: \; 0 \leq \{ \frac{1}{2} - \ell x \} \leq \theta - x} \frac{1}{\ell(\ell-1)} \leq \frac{\theta}{2^r}.
\end{equation}
We can write the left-hand side as
$$
 \sum_{k = 0}^\infty \; \sum_{2^r < \ell \leq m: \;
-k \leq \frac{1}{2} - \ell x \leq -k + \theta - x} \frac{1}{\ell(\ell-1)}
$$
or equivalently
\begin{equation}\label{k-1}
 \sum_{k = 0}^\infty \; \sum_{\max(2^r+1, \frac{k-\theta+1/2}{x}+1) \leq \ell \leq \min(\frac{k+1/2}{x}, m)} \frac{1}{\ell(\ell-1)}.
\end{equation}
Using the fundamental theorem of calculus to write $\frac{1}{\ell(\ell-1)} = \int_{\ell-1}^\ell \frac{dt}{t^2}$, we may upper bound this expression by
$$
 \sum_{k = 0}^\infty  \int_{[2^r,m] \cap [\frac{k-\theta+1/2}{x}, \frac{k+1/2}{x}]} \frac{dt}{t^2}
$$
and so (using $m \leq 2^{r+1}$) it will suffice to establish the bound
\begin{equation}\label{k-2}
 \sum_{k = 0}^\infty  \int_{[2^r,2^{r+1}] \cap [\frac{k-\theta+1/2}{x}, \frac{k+1/2}{x}]} \frac{dt}{t^2} \leq \frac{\theta}{2^r}.
\end{equation}
The interval 
$[2^r,2^{r+1}] \cap [\frac{k-\theta+1/2}{x}, \frac{k+1/2}{x}]$ is only non-empty when
$$ \frac{k-\theta+1/2}{x} \leq 2^{r+1}\;\text{ and }\; \frac{k+1/2}{x} \geq 2^r.$$
Hence we may restrict the $k$ summation in \eqref{k-2} to the region
$$
2^r x - \tfrac{1}{2} \leq k \leq 2^{r+1} x - \tfrac{1}{2} + \theta.
$$
By \eqref{jdef} and the fact that $0 \leq \theta < 1$, we conclude that
\begin{equation}\label{k-bound}
\frac{j}{2}-1 < k < j.
\end{equation}

We now split into cases.

{\bf Case 1:} $j=1$.  By \eqref{k-bound}, the only value of $k$ that contributes to \eqref{k-2} is $k=0$, and we can upper bound the left-hand side of \eqref{k-2} by
$$
\int_{[2^r,\frac{1/2}{x}]} \frac{dt}{t^2} = \frac{1}{2^r} - 2x$$
which is precisely $\frac{\theta}{2^r}$ as desired thanks to \eqref{jdef}.

{\bf Case 2:} $j=2$.  Now \eqref{k-bound} restricts us to $k=1$, and we can upper bound the left-hand side of \eqref{k-2} by
$$ \int_{[\frac{3/2 - \theta}{x},\frac{3/2}{x}]} \frac{dt}{t^2} = \frac{x}{3/2-\theta} - \frac{x}{3/2}$$
which by \eqref{jdef} simplifies to
$$ \frac{2}{3} \cdot \frac{2-\theta}{3-2\theta} \cdot \frac{\theta}{2^r}.$$
Since $\frac{2}{3}$ and $\frac{2-\theta}{3-2\theta}$ are at most 1,
we obtain the desired upper bound of $\frac{\theta}{2^r}$ (with a little room to spare).

{\bf Case 3:} $j=3$.  Now \eqref{k-bound} restricts us to $k=1,2$.  The left-hand side of \eqref{k-2} is now upper bounded by
$$ \int_{[2^r,\frac{3/2}{x}]} \frac{dt}{t^2} 
+ \int_{[\frac{5/2 - \theta}{x},\frac{5/2}{x}]} \frac{dt}{t^2}
= \frac{1}{2^r} - \frac{x}{3/2} + \frac{x}{5/2-\theta} - \frac{x}{5/2}$$
which by \eqref{jdef} simplifies to
$$ \biggl( \frac{1}{3} + \frac{2}{5} \cdot \frac{3-\theta}{5-2\theta}\biggr)
\frac{\theta}{2^r}.$$
Since $0 \leq \theta < 1$, we have $\frac{1}{3} + \frac{2}{5} \cdot \frac{3-\theta}{5-2\theta}
< \frac{1}{3} + \frac{2}{5} \cdot \frac{2}{3} < 1$,
and we obtain the desired upper bound of $\frac{\theta}{2^r}$ (with a little more room to spare).

{\bf Case 4:} $j=4$.  Now \eqref{k-bound} restricts us to $k=2,3$.  The left-hand side of \eqref{k-2} is now upper bounded by
$$ \int_{[\frac{5/2 - \theta}{x},\frac{5/2}{x}]} \frac{dt}{t^2}
+ \int_{[\frac{7/2 - \theta}{x},\frac{7/2}{x}]} \frac{dt}{t^2}
 = \frac{x}{5/2-\theta} - \frac{x}{5/2} + \frac{x}{7/2-\theta} - \frac{x}{7/2}$$
which by \eqref{jdef} simplifies to
$$ \biggl( \frac{2}{5} \cdot \frac{4-\theta}{5-2\theta} + \frac{2}{7} \cdot \frac{4-\theta}{7-2\theta}\biggr) \frac{\theta}{2^r}.$$
Since $0 \leq \theta < 1$, we have
$$ \frac{2}{5} \cdot \frac{4-\theta}{5-2\theta} + \frac{2}{7} \cdot \frac{4-\theta}{7-2\theta}
< \frac{2}{5} \cdot \frac{3}{3} + \frac{2}{7} \cdot \frac{3}{5} < 1$$
and we again obtain the desired upper bound of $\frac{\theta}{2^r}$ (with a fair amount\footnote{Note that the increasing ease of proof of \eqref{3r} as $j$ increases is consistent with the behavior exhibited in Figure \ref{fig:combination}.} of room to spare).  

{\bf Case 5:} $j>4$.  There is a finite interval $[k_1,k_2]$ of integers $k$ for which $[2^r,m] \cap [\frac{k-\theta+1/2}{x}, \frac{k+1/2}{x}]$ is non-empty.  From the decreasing nature of $\frac{1}{t^2}$, we have
$$ \int_{[2^r,2^{r+1}] \cap [\frac{k-\theta+1/2}{x}, \frac{k+1/2}{x}]} \frac{dt}{t^2} \leq
\theta \int_{[2^r,2^{r+1}] \cap [\frac{k-1+1/2}{x}, \frac{k+1/2}{x}]} \frac{dt}{t^2}$$
when $k_1 < k \leq k_2$.  Thus we may bound the left-hand side of \eqref{k-2} by
$$ \int_{[2^r,2^{r+1}] \cap [\frac{k_1-\theta+1/2}{x}, \frac{k_1+1/2}{x}]} \frac{dt}{t^2} + \theta \int_{[2^r,2^{r+1}]} \frac{dt}{t^2}.$$
For the first integral, we observe that the domain is an interval of length at most $\theta/x$ and the integrand is at most $1/2^{2r}$; meanwhile, the second integral can be evaluated as $\frac{1}{2^r} - \frac{1}{2^{r+1}}$.  Putting all this together, we have upper bounded the left-hand side of \eqref{k-2} by
$$ \frac{\theta}{2^{2r} x} + \frac{\theta}{2^r} - \frac{\theta}{2^{r+1}}.$$
But since $j>4$, we have from \eqref{jdef} that
$$ \frac{\theta}{2^{2r} x} \leq \frac{\theta}{2^{2r} \cdot 4/2^{r+1}} = \frac{\theta}{2^{r+1}}$$
and the claim \eqref{k-2} follows.  This concludes the proof of (i).

Now we prove (ii).  Integrating (i) against an arbitrary probability measure $\mu$ on $\R$, we conclude that
\begin{multline*}
2 \E_\mu g(x)  + \sum_{j=1}^r \E_\mu g(2^j x)\\
+  \sum_{\ell = 2^r+1}^m \frac{m}{l(l-1)} \biggl(\E_\mu g(\ell x) + \E_\mu g((2^{r+1}+1-\ell)x)- \E_\mu g(x)\biggr) \leq 1.
\end{multline*}
Since 
\begin{equation}\label{asum}
\sum_{\ell=2^r+1}^m \frac{m}{l(l-1)} = \frac{m}{2^r} - 1 < 2,
\end{equation}
the net coefficient of $\E_\mu g$ here is positive.  Thus we have
$$ \biggl(2 + \sum_{j=1}^r 1
+ \sum_{\ell = 2^r+1}^m \frac{m}{l(l-1)}(1+1-1)\biggr)
\min_{1 \leq k \leq m} \E_\mu g(kx) \leq 1$$
which gives the upper bound in \eqref{4r} after a brief computation using \eqref{asum}.

To establish the matching lower bound using the measure $\mu_m$, it suffices to show that
$$ \sum_{j=1}^{r+1} g\biggl( \frac{k}{2^j}\biggr) + \frac{m}{2^r} g\biggl( \frac{k}{2m}\biggr) = 1$$
for all $k=1,\dots,m$.  But from \eqref{tele} and telescoping series we have
$$ \sum_{j=1}^{r+1} g\biggl( \frac{k}{2^j}\biggr) = \{k\}^* - \biggl\{\frac{k}{2^{r+1}}\biggr\}^* = 1 - \frac{k}{2^{r+1}}$$
while since $\frac{k}{2m} \leq \frac{1}{2}$ we have
$$ g\biggl(\frac{k}{2m}\biggr)  = \frac{k}{2m}$$
and the claim follows.
\end{proof}

\begin{remark}
\label{non-unique}
The particular linear combination of the $g(kx)$ used in \eqref{3r} was discovered after some numerical experimentation, guided by the fact that this combination should attain the bound of $1$ at every point in the support of the optimal measure $\mu_m$.  However, this constraint does not fully determine the coefficients of the combination,
and it would be possible to establish the bound \eqref{4r} using other linear combinations of $g(kx)$
instead.  For instance, when $m$ is a power of two, the inequalities \eqref{3r} and \eqref{3r-easy} differ, even though they both imply \eqref{4r}: see Figures \ref{fig:combination2}, \ref{fig:combination3}.
\end{remark}

\begin{figure} [t]
\centering
\includegraphics[width=4in]{./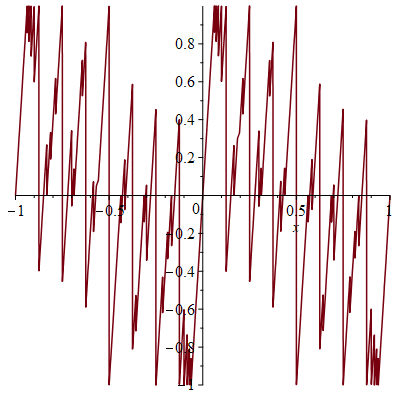}
\caption{The left-hand side of \eqref{3r} when $m=8$; compare with Figure \ref{fig:combination2}.}
\label{fig:combination3}
\end{figure}

\section{Optimal equidistribution for the sine function}
\label{sectionsine}

The function $\sin(2\pi x)$ is somewhat analogous
to the sawtooth function $g(x)$ studied in Theorem \ref{sawtooth}.
We now solve exactly the corresponding optimization problem
for the sine function in Theorem \ref{sine}.
The problem solved here
is equivalent to finding
the optimal inequality of the form $\sum_{k=1}^m a_k\sin kx\leq 1$
for each positive integer $m$.

\begin{theorem}
\label{sine}
Let $m$ be a positive integer.
\begin{itemize}
\item[(i)]  We have
\begin{equation}\label{sine-ineq}
\sum_{1\leq k\leq m; \; k\text{ odd}}\frac{2}{(m+1)^2}
\cot\biggl( \frac{\pi k}{2m+2}\biggr)\bigl[(m+1-k) \sin kx+ k \sin (m+1-k)x\bigr]\leq 1
\end{equation}
for all real numbers $x$. Write this inequality
as $\sum_{k=1}^m a_k\sin kx\leq 1$; then all the coefficients
$a_k$ are nonnegative.
\item[(ii)] We have
\begin{equation}\label{sine-r}
\min_{1 \leq k \leq m} \E_\mu \sin k x  \leq \frac{1}{c_m}
\end{equation}
for every (Borel) probability measure $\mu$ on the real line, where
\begin{equation}\label{cm-def}
c_m \coloneqq \frac{2}{m+1} \sum_{1 \leq j \leq m; \; j \text{ odd}}
\cot \biggl(\frac{\pi j}{2m+2}\biggr).
\end{equation}
This is the optimal bound: equality is attained for the measure $\mu_m$
with mass $\frac{2}{(m+1)c_m} \cot \bigl(\frac{\pi j}{2m+2}\bigr)$ at $\frac{\pi j}{m+1}$ for every odd $1 \leq j \leq m$.
\end{itemize}

\end{theorem}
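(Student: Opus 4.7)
The plan is to establish (i) first and then derive (ii) from it by the linear-programming duality argument already set up for Theorem~\ref{sawtooth}. Two preliminary checks are routine. First, the coefficients $a_k$ in (i) are all nonnegative: they are sums of terms of the form $\frac{2\cot(\pi k'/(2m+2))}{(m+1)^2}\cdot(\text{positive integer})$ with $\pi k'/(2m+2)\in(0,\pi/2)$ for $1\leq k'\leq m$, so $\cot>0$. Second, a bookkeeping argument pairing the contribution of $(m+1-k')\sin k'x$ to $a_{k'}$ with the contribution of $k'\sin(m+1-k')x$ to $a_{m+1-k'}$, as $k'$ runs over odd values in $\{1,\dots,m\}$, collapses the double sum to $\sum_{k=1}^m a_k=\frac{2}{m+1}\sum_{k\text{ odd}}\cot(\pi k/(2m+2))=c_m$, which is the sharp normalization needed for (ii).

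The crux of (i) is the pointwise upper bound. The natural first move is to apply the product-to-sum identity
\[
(m+1-k)\sin kx+k\sin(m+1-k)x=(m+1)\sin\tfrac{(m+1)x}{2}\cos\tfrac{(2k-m-1)x}{2}+(m+1-2k)\cos\tfrac{(m+1)x}{2}\sin\tfrac{(2k-m-1)x}{2},
\]
which recasts the left-hand side of (i) as $\sin\bigl(\tfrac{(m+1)x}{2}\bigr)A(x)+\cos\bigl(\tfrac{(m+1)x}{2}\bigr)B(x)$, where $A(x)=\tfrac{2}{m+1}\sum_{k\text{ odd}}\cot\bigl(\tfrac{\pi k}{2m+2}\bigr)\cos\bigl(\tfrac{(2k-m-1)x}{2}\bigr)$ and $B$ is a similar cotangent-weighted sine sum. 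Using the key identity stated below for~(ii), a short computation shows that $A(\pi j/(m+1))=(-1)^{(j-1)/2}$ at each odd~$j$ while $\cos\bigl(\tfrac{(m+1)x}{2}\bigr)=0$ there, so the bound is attained at the claimed extremal points, forcing the required double zeros of $1-(\text{LHS})$. To establish the global bound $\leq 1$, I would pursue either (a)~a closed-form evaluation of $A$ and $B$ via Dirichlet-kernel-type cotangent sums, reducing to an elementary pointwise estimate, or (b)~an explicit Fej\'er--Riesz factorization $1-(\text{LHS})=|Q(e^{ix})|^2$ with $Q$ a polynomial of degree $m$ whose unit-circle roots are placed at $e^{i\pi j/(m+1)}$ for odd~$j$.

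Once (i) is available, (ii) is routine. Integrating (i) against $\mu$ gives $\sum_k a_k\,\E_\mu\sin kx\leq 1$, and combining $a_k\geq 0$ with $\sum_k a_k=c_m$ yields $c_m\min_k\E_\mu\sin kx\leq 1$ (the case $\min<0$ is trivial), hence $\min_k\E_\mu\sin kx\leq 1/c_m$. The measure $\mu_m$ has total mass $1$ by the definition of $c_m$, and $\E_{\mu_m}\sin kx=1/c_m$ for each $k\in\{1,\dots,m\}$ is equivalent to
\[
\sum_{\substack{1\leq j\leq m\\ j\text{ odd}}}\cot\Bigl(\tfrac{\pi j}{2m+2}\Bigr)\sin\Bigl(\tfrac{\pi jk}{m+1}\Bigr)=\tfrac{m+1}{2},
\]
which I would prove by the elementary expansion $\cot\theta\sin(2k\theta)=1+2\sum_{r=1}^{k-1}\cos(2r\theta)+\cos(2k\theta)$ (by induction on $k$) and evaluating the resulting cosine sums $\sum_{j\text{ odd}}\cos(\pi jr/(m+1))$ via a geometric series: they equal $(-1)^{r-1}/2$ for $m$ even and $0$ for $m$ odd (when $r\not\equiv 0\pmod{m+1}$), and the telescoping cancellation produces the constant $(m+1)/2$. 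The main obstacle is clearly the pointwise inequality in (i): although the extremal points and the double-zero structure of $1-(\text{LHS})$ are dictated by the extremal measure, extracting a clean positivity certificate from the cotangent-weighted coefficients is delicate, and both of the routes (a) and (b) require substantial trigonometric manipulation.
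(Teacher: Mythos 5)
The framework of your proposal is sound: the derivation of (ii) from (i) by integrating against $\mu$, the nonnegativity of the $a_k$, the identity $\sum a_k = c_m$, and the product-to-sum identity recasting the left-hand side as $\sin\tfrac{(m+1)x}{2}A(x)+\cos\tfrac{(m+1)x}{2}B(x)$ are all correct (the last is verified by writing $kx$ and $(m+1-k)x$ as $\tfrac{(m+1)x}{2}\pm\tfrac{(2k-m-1)x}{2}$). Your proposed proof that $\E_{\mu_m}\sin kx=1/c_m$ — via the induction-proved identity $\cot\theta\sin 2k\theta=1+2\sum_{r=1}^{k-1}\cos 2r\theta+\cos 2k\theta$ and geometric-series evaluation of $\sum_{j\text{ odd}}\cos(\pi jr/(m+1))$ — is a genuinely different and workable route from the paper's (which computes the discrete Fourier transform of the square wave on $\Z/(2m+2)$). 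Both give the same Eisenstein--Stern-type identity.

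However, the central claim — part (i), the pointwise inequality $f(x)\leq 1$ — is not proved. You offer two \emph{routes}, (a) a closed-form evaluation of $A,B$ and (b) an explicit Fej\'er--Riesz factorization $1-f=|Q(e^{ix})|^2$, but carry out neither, and in fact you acknowledge at the end that ``extracting a clean positivity certificate \ldots is delicate.'' Route (b) as stated is circular: Fej\'er--Riesz guarantees such a $Q$ \emph{only after} one knows $1-f\geq 0$, and you never exhibit $Q$; moreover, the $\lceil m/2\rceil$ unit-circle roots at $e^{i\pi j/(m+1)}$ account for only about half the $m$ roots of $Q$, and the remaining roots are not determined by the extremal data. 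Route (a) does not obviously reduce to anything elementary, because the decomposition into $A$ and $B$ does not by itself control the combined expression. This is a genuine gap, and it is exactly where the paper's argument does something quite different: it interprets $f$ as the convolution of a square-wave-modulated Dirac comb $\nu_m$ with the Fej\'er kernel $F_{m+1}$, from which $f(\pi j/(m+1))=1$ and $f'(\pi j/(m+1))=0$ for odd $j$ follow immediately. Then it observes that $f'(x)=P(\cos x)$ for a polynomial $P$ of degree $m$, so $f'$ has at most $m$ zeros in $[0,\pi]$; counting the $\lfloor m/2\rfloor+1$ forced zeros from $f'=0$ at odd multiples of $\pi/(m+1)$ together with the $\lceil m/2\rceil-1$ Rolle zeros between consecutive maxima exhausts this budget, which pins down the monotonicity pattern of $f$ and yields $f\leq 1$ on $[0,\pi]$. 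Because $f$ is odd and $2\pi$-periodic, one \emph{also} needs $f\geq -1$ on $[0,\pi]$, and the paper handles this with a separate argument: it rewrites $f(x+\tfrac{\pi}{m+1})-f(x-\tfrac{\pi}{m+1})$ as a short linear combination of translates of $F_{m+1}$ and proves a Fej\'er-kernel comparison (Lemma~\ref{fejer}). The two-sided structure — zero-counting for the upper bound, a Fej\'er-kernel translate inequality for the lower bound — is the key idea missing from your sketch.
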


It follows from our argument (in particular the properties
of the measure $\mu_m$ in (ii))
that statement (i) is the optimal inequality of the form
$\sum_{k=1}^m a_k \sin kx\leq 1$, in the sense that it has
the maximal value of $\sum a_k$ (namely, $c_m$)
among all inequalities of this form. The closest relative
of this inequality in the literature seems to be Vaaler's inequality,
of the form
$\sum_{k=1}^m a_k\sin 2\pi kx\geq x-\tfrac{1}{2}$
for $x\in [0,\tfrac{1}{2}]$
\cite[Theorem 18]{Vaaler}.

By comparing
the cotangent sum $c_m$ to an integral, with the first few
terms of the sum separated off for greater accuracy,
one checks that $c_m$ is close to $(2/\pi)\log(m+1)$.
Precisely, by an argument due to Pinelis \cite{Pinelis},
$$c_m=\frac{2}{\pi}\log(m+1)
+\frac{2}{\pi}\biggl(\log\biggl(\frac{4}{\pi}\biggr)+\gamma\biggr)+o(1),$$
where $\gamma$ is Euler's constant.
In particular, the bounds for the sine problem are of the same order of magnitude as the bounds for the sawtooth problem.  If one replaces sine by cosine then the problem becomes trivial, as in this case the Dirac mass at the origin is clearly the extremizing measure and there is no decay in $m$.

Another formula for the constant $a_k$
in inequality \eqref{sine-ineq} is
$$a_k=\frac{4(m+1-k)}{(m+1)^2} \;
\frac{\sin^2 \frac{\lceil m/2 \rceil k \pi}{m+1}}{\sin \frac{\pi k}{m+1}}$$
for $1\leq k\leq m$. We will not use this, however.

\begin{figure} [t]
\centering
\includegraphics[width=4in]{./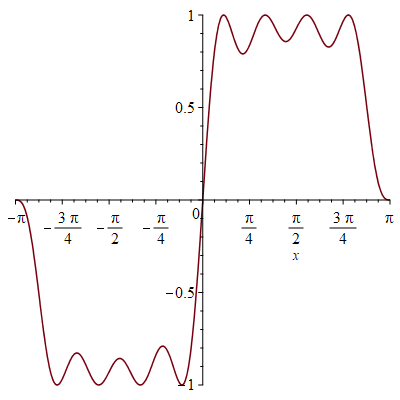}
\caption{The left-hand side $f(x)$ of \eqref{sine-ineq} when $m=8$.  We are showing that it is at most $1$.  The bound of $1$ is attained exactly at the support $\{\pi/9,3\pi/9,5\pi/9,7 \pi/9\}$ of the optimal measure $\mu_8$.}
\label{fig:combination8-sine}
\end{figure}

\begin{figure} [t]
\centering
\includegraphics[width=4in]{./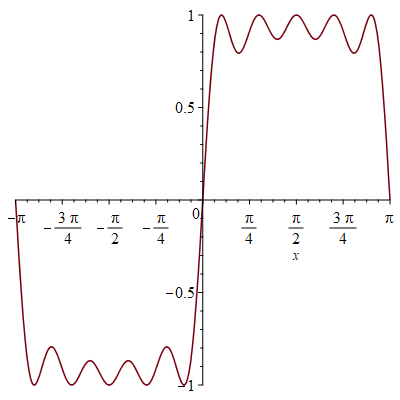}
\caption{The left-hand side $f(x)$ of \eqref{sine-ineq} when $m=9$.  We are showing that it is at most $1$.  The bound of $1$ is attained exactly at the support $\{\pi/10,3\pi/10,5\pi/10,7 \pi/10, 9\pi/10\}$ of the optimal measure $\mu_9$.}
\label{fig:combination9-sine}
\end{figure}

\begin{proof}  We first show that part (ii) follows from (i).
All the coefficients in the inequality
$\sum_{k=1}^m a_k\sin kx\leq 1$ from \eqref{sine-ineq} are nonnegative,
since $\cot x$ decreases from $\infty$ to 0 on the interval
$(0,\pi/2]$.
Therefore, on integrating this inequality
against any Borel probability measure $\mu$ on the real line, we have
$$\sum_{k=1}^m a_k\; \E_\mu \sin kx \leq 1.$$
From the definition of $a_k$ in terms of cotangents, it is immediate
that $\sum_{k=1}^m a_k = c_m$. This proves \eqref{sine-r},
namely that $\min_{1 \leq k \leq m} \E_\mu \sin k x  \leq 1/c_m$.

Next, we show that for every $1 \leq k \leq m$,
the measure $\mu_m$ defined in (ii) satisfies
$$\E_{\mu_m} \sin k x = \frac{1}{c_m}.$$
The proof seems easiest in terms
of the Fourier transform on the cyclic group $G=\Z/(2m+2)$.
Namely, for a complex-valued function $f$ on $G$, define the Fourier
transform on the dual group $\widehat{G}=\Z/(2m+2)$ by
$\widehat{f}(\xi)=(1/(2m+2))\sum_{x\in G}f(x)e^{-2\pi i \xi x/(2m+2)}$;
then the inverse Fourier transform gives that $f(x)=
\sum_{\xi\in \widehat{G}} \widehat{f}(\xi)e^{2\pi i\xi x/(2m+2)}$.

Let $\sigma_m$ be the function on $G=\Z/(2m+2)$
defined by $\sigma_m(k)=1$ if $1\leq k\leq m$, $\sigma_m(k)=-1$ if $-m\leq k\leq -1$,
and zero otherwise (a discrete version of a square wave).
Let $\zeta=e^{2\pi i/(2m+2)}$. Then
the Fourier transform of $\sigma_m$ is, for $j\in \Z/(2m+2)$,
\begin{align*}
\widehat{\sigma_m}(j)&=\frac{1}{2m+2}\sum_{k=0}^{2m+1}\sigma_m(k) \zeta^{-jk}\\
&=\frac{-1}{2m+2}\sum_{k=1}^m (\zeta^{jk}-\zeta^{-jk}).
\end{align*}
Clearly $\widehat{\sigma_m}(0)=0$. For $j\neq 0$ in $\Z/(2m+2)$, we have
$\sum_{k=1}^m \zeta^{jk}=(\zeta^j-(\zeta^j)^{m+1})/(1-\zeta^j)$.
Since $\zeta^{m+1}=-1$, that sum is $(1+\zeta^j)/(1-\zeta^j)$ if $j$
is odd and $-1$ if $j$ is even. Likewise, 
$\sum_{k=1}^m \zeta^{-jk}$ is $-(1+\zeta^j)/(1-\zeta^j)$ if $j$ is
odd and $-1$ if $j$ is even. So the Fourier transform above is given
by $\widehat{\sigma_m}(j)=(-1/(m+1))(1+\zeta^j)/(1-\zeta^j)$ if $j$ is odd
and 0 if $j$ is even. Equivalently,
$\widehat{\sigma_m}(j)=(-i/(m+1))\cot(\pi j/(2m+2))$
for $j$ odd.

Therefore, applying the inverse Fourier transform tells us,
in particular for $1\leq k\leq m$, that
\begin{align*}
1&=\sigma_m(k)\\
&=\sum_{j=0}^{2m+1}\widehat{\sigma_m}(j) e^{2\pi i jk/(2m+2)}\\
&=\sum_{1\leq j\leq m; \; j\text{ odd}}\frac{2}{m+1} \;
\cot\biggl( \frac{\pi j}{2m+2}\biggr) \;
\sin\biggl(\frac{\pi jk}{m+1}\biggr).
\end{align*}
(This can also be deduced from an identity due to Eisenstein
and Stern, discussed in the introduction to \cite{BY}.)
After dividing by $c_m$, this says that the measure $\mu_m$
defined in (ii) has $\E_{\mu_m}\sin kx=1/c_m$
for all $1\leq k\leq m$, as we want.

It remains to prove part (i).
We can relate the linear combination of sines $f(x)$
on the left side of \eqref{sine-ineq} to the function
$\sigma_m$ above.
First, let $\nu_m$ be the function on $\Z/(2m+2)$
defined by $\nu_m(j)=2\sigma_m(j)$ if $j$ is odd and 0 otherwise;
so $\nu_m(j)$ is 2 for $1\leq j\leq m$ and $j$ odd,
$-2$ for $m+2\leq j\leq 2m-1$ and $j$ odd, and 0 otherwise.
One checks that multiplying a function $f(j)$ on $\Z/(2m+2)$
by $(-1)^j$ corresponds to shifting its Fourier transform
by $m+1$. Therefore, the Fourier transform of $\nu_m$ is
$$\widehat{\nu_m}(j)=\widehat{\sigma_m}(j)-\widehat{\sigma_m}(m+1+j).$$
Since $\sigma_m$ is an odd function, so is $\widehat{\sigma_m}$,
and hence we
can rewrite this formula as
$$\widehat{\nu_m}(j)=\widehat{\sigma_m}(j)+\widehat{\sigma_m}(m+1-j).$$
If $m$ is odd, then this is
$$\widehat{\nu_m}(j)=\frac{-i}{m+1}\biggl(\cot\frac{\pi j}{2m+2}
+\cot\frac{\pi (m+1-j)}{2m+2}\biggr) $$
if $j$ is odd and 0 if $j$ is even.
If $m$ is even, then
$$\widehat{\nu_m}(j)=\begin{cases}
\frac{-i}{m+1}\cot\bigl(\frac{\pi j}{2m+2}\bigr)
&\text{if }j\text{ is odd,}\\
\frac{-i}{m+1}\cot\bigl(\frac{\pi (m+1-j)}{2m+2}\bigr)
&\text{if }j\text{ is even.}
\end{cases}$$

This is clearly related to the function $f(x)$. To make
the connection precise, consider another interpretation
of the Fourier transform on $\Z/(2m+2)$: namely, as Fourier
series on the circle $\R/2\pi\Z$ applied to linear combinations
of Dirac delta functions
with support in $(1/(2m+2))2\pi\Z$. 
Let $S$ denote the square wave function
$$ S(x) \coloneqq 1_{0 < \{x/2\pi\} < 1/2} - 1_{1/2 < \{x/2\pi\} < 1}.$$
We sample this function at odd multiples of $\pi/(m+1)$
to create a discrete approximation $\nu_m$ to $S(x)$,
basically a Dirac comb modulated by a square wave:
\begin{equation}\label{num}
 \nu_m \coloneqq \frac{2\pi}{m+1} \sum_{j \in \Z; j \text{ odd}} S\biggl(\frac{\pi j}{m+1}\biggr) \delta_{\pi j/(m+1)}.
\end{equation}
This measure is $2\pi$-periodic. Here we multiplied the function
$\nu_m$ defined earlier on $\Z/(2m+2)$ by $2\pi/(2m+2)$ in order to make
the Fourier coefficients
$$ \widehat{\nu_m}(k) \coloneqq \frac{1}{2\pi}
\int_0^{2\pi} e^{-i kx}\ d\nu(x)$$
the same as those we computed
on $\Z/(2m+2)$. (In particular,
these Fourier coefficients are periodic with period $2m+2$.)

By inspection, then, the function $f(x)$ from part (i) of the theorem is
$$ f(x) =  \frac{2i}{m+1} \sum_{1 \leq k \leq m} (m+1-k)
\widehat{\nu_m}(k) \sin kx,$$
or equivalently (due to the odd nature of $\nu_m$ and hence $\widehat{\nu_m}$)
$$ f(x) = \sum_{-m \leq k \leq m} \biggl(1 - \frac{|k|}{m+1}\biggr)
\widehat{\nu_m}(k) e^{ikx}.$$

This is clearly related to the {\it Fej\'er kernel},
$F_{m+1}(x) \coloneqq \frac{1}{m+1} \left( \frac{\sin (m+1)(x/2)}{\sin x/2}
\right)^2
=\sum_{|k|\leq m}(1-\frac{|k|}{m+1})e^{ikx}$.
(The original motivation for the Fej\'er kernel was to show
that every continuous function on the circle
is the uniform limit of the averaged partial sums of its Fourier series
\cite[Theorem 1.3.3]{DM}.)
Namely, since the Fourier series takes convolution
to multiplication,
$f$ is the convolution of $\nu_m$ with the Fej\'er kernel:
$$ f(x) = \frac{1}{2\pi} \int_0^{2\pi} F_{m+1}(x-y) \; d\nu_m(y).$$
Here $F_{m+1}(x)$ equals $m+1$ at multiples of $2\pi$,
and it vanishes at other even multiples of $\pi / (m+1)$.
So from \eqref{num} we have
$$ f\biggl(\frac{\pi j}{m+1}\biggr) = S\biggl(\frac{\pi j}{m+1}\biggr)$$
whenever $j$ is odd.  This proves:
\begin{equation}\label{fa}
 f\biggl(\frac{\pi j}{m+1}\biggr) = 1 
\end{equation}
whenever $1 \leq j \leq m$ is odd.  More broadly, we now have
an interpretation of $f$ as an interpolation of $S$
given by the Fej\'er kernel.

\begin{figure} [t]
\centering
\includegraphics[width=4in]{./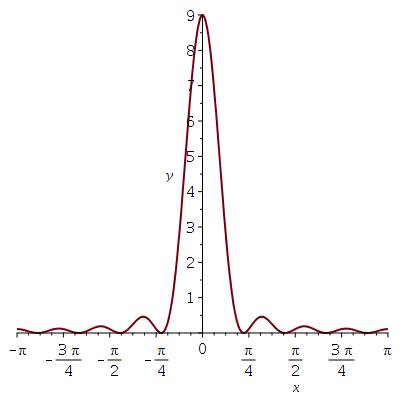}
\caption{The Fej\'er kernel $F_{m+1}(x)$ with $m=8$.}
\label{fig:fejer-8}
\end{figure}

Next, let us show that
\begin{equation}\label{fb}
 f'\biggl(\frac{\pi j}{m+1}\biggr) = 0 
\end{equation}
for any odd integer $j$.
Namely, since $f$ is the convolution of $\nu$ with the Fej\'er kernel
$F_{m+1}$, $f'$ is the convolution of $\nu$ with the derivative
$F_{m+1}'$.
But $F_{m+1}'(x)$ vanishes at all even multiples of $\pi/(m+1)$,
since $F_{m+1}$ reaches its minimum 0 or maximum $m+1$ at those points.
Since $\nu$ is supported on odd multiples of $\pi/(m+1)$, this proves
\eqref{fb}.

Clearly $f$ is odd and $2\pi$-periodic; in particular 
\begin{equation}\label{fosc}
f(0) = f(\pi) = 0.
\end{equation}
Next, since $f'(x)$ is a linear combination of the functions
$\cos kx$ for $1\leq k \leq m$,
de Moivre's formula gives that $f'(x) = P(\cos x)$ for some polynomial $P$ of degree $m$.  In particular, $f'$ has at most $m$ zeros in the interval $[0,\pi]$ (counting multiplicity for the zeros in $(0,\pi)$).  On the other hand, from \eqref{fb} we see that $f'$ has $\lfloor m/2 \rfloor + 1$ zeros in this region, at the points $\frac{\pi j}{m+1}$ with $1 \leq j \leq m+1$ odd.  On the other hand, from \eqref{fa} and Rolle's theorem (and the fact that $f$ is not locally constant) we also see that we have $\lceil m/2 \rceil - 1$ additional zeros distinct from the preceding ones, with one additional zero strictly between $\frac{\pi j}{m+1}$ and $\frac{\pi (j+2)}{m+1}$ whenever $1 \leq j \leq j+2 \leq m$ is odd.  Thus all the $m$ zeros of $f'$ are accounted for, and there are no further zeros; in particular all the zeros of $f'$ in $(0,\pi)$ are simple, and $f'$ changes sign as it crosses each zero in $(0,\pi)$.  We then conclude from \eqref{fosc}, \eqref{fa}, and the mean value theorem that
\begin{itemize}
\item $f(x)$ is strictly increasing from $0$ to $1$ as $x$ goes from $0$ to $\frac{\pi}{m+1}$;
\item Whenever $1 \leq j \leq j+2 \leq m$ is odd, $f$ starts at a local maximum of $1$ at $x = \frac{\pi j}{m+1}$, strictly decreases to a local minimum somewhere between $\frac{\pi j}{m+1}$ and $\frac{\pi (j+2)}{m+1}$, then strictly increases back to a local maximum of $1$ at $x = \frac{\pi (j+2)}{m+1}$.
\item If $j = 2 \lfloor \frac{m}{2} \rfloor + 1$ is the largest odd number less than $m+1$, $f$ is strictly decreasing from $1$ to $0$ as $x$ goes from $\frac{\pi j}{m+1}$ to $\pi$.
\end{itemize}
(See Figures \ref{fig:combination8-sine}, \ref{fig:combination9-sine}.)
This already establishes that $f(x) \leq 1$ when $0 \leq x \leq \pi$.
If we can also show that $f(x) \geq -1$ for $0 \leq x \leq \pi$, then as $f$ is odd and $2\pi$-periodic we will have $f(x) \leq 1$ for all $x$,
proving the theorem. In reality, this bound will be true with substantial room to spare, as $f(x)$ is only moderately smaller than $1$ on most of the interval
$[0,\pi]$ (cf.\ the Gibbs phenomenon).

From the observations above and the oddness of $f$,
we know that $f(x)\geq -1$ if $x\in [-\pi/(m+1),\pi/(m+1)]$
and also if $x\in [m\pi/(m+1),(m+2)\pi/(m+1)]$. So it suffices
to show that $f(x+\frac{\pi}{m+1})-f(x-\frac{\pi}{m+1})]$ is nonnegative for
$x\in [0, (m-1)\pi/(2m+2)]$, and nonpositive for $x\in [(m+3)\pi/(2m+2),
\pi]$.

Assume that $m$ is odd.
Since $f$ is the convolution of $\nu_m$ with the Fej\'er kernel
$F_{m+1}$, we have
\begin{align*}
f\biggl(x+\frac{\pi}{m+1}\biggr) -f\biggl(x-\frac{\pi}{m+1}\biggr)
&= \biggl[\nu_m\biggl( x+\frac{\pi}{m+1}\biggr)
-\nu_m\biggl(x-\frac{\pi}{m+1}\biggr)\biggr]*F_{m+1}\\
&= \frac{4\pi}{m+1}\bigl[ \delta_0-\delta_{\pi}\bigr] *F_{m+1}\\
&= \frac{4\pi}{m+1}\bigl[ F_{m+1}(x)-F_{m+1}(x-\pi)\bigr].
\end{align*}
(This description of the change of $\nu_m$ when the input changes
by $2\pi/(m+1)$ uses that $m$ is odd.)
So it suffices
to show that $F_{m+1}(x)-F_{m+1}(x-\pi)$ is nonnegative for
$x\in [0, (m-1)\pi/(2m+2)]$, and that it is nonpositive
for $x\in [(m+3)\pi/(2m+2),
\pi]$. We prove this (in fact for slightly bigger intervals)
in Lemma \ref{fejer} below.
That completes the proof for $m$ odd. 

For $m$ even,
we have the somewhat messier situation that
\begin{align*}
{} &\; f\biggl(x+\frac{\pi}{m+1}\biggr) -f\biggl(x-\frac{\pi}{m+1}\biggr)\\
{}=&\; \biggl[\nu_m\biggl( x+\frac{\pi}{m+1}\biggr)
-\nu_m\biggl(x-\frac{\pi}{m+1}\biggr)\biggr]*F_{m+1}\\
{}=&\; \frac{2\pi}{m+1}\bigl[ 2\delta_0-\delta_{m\pi/(m+1)}
-\delta_{(m+2)\pi/(m+1)}\bigr] *F_{m+1}\\
{}=&\; \frac{2\pi}{m+1}\biggl[2F_{m+1}(x)
-F_{m+1}\biggl(x-\frac{m\pi}{m+1}\biggr)
-F_{m+1}\biggl(x-\frac{(m+2)\pi}{m+1}\biggr)\biggr].
\end{align*}
This is nonnegative for $x\in [0, \frac{(m-1)\pi}{2m+2}]$
and nonpositive for $x\in [\frac{(m+3)\pi}{2m+2},\pi]$, by Lemma
\ref{fejer} below (which in fact works for slightly bigger
intervals). This completes the proof of Theorem
\ref{sine}.
\end{proof}

\begin{lemma}
\label{fejer}
Let $m$ be a positive integer, and let $F(x)=F_{m+1}(x)
=\frac{1}{m+1} \left( \frac{\sin (m+1)(x/2)}{\sin x/2} \right)^2$
be the Fej\'er kernel.
\begin{itemize}
\item[(i)] If $m$ is odd, then $F(x)-F(x-\pi)$
is nonnegative if $x\in [-\pi/2,\pi/2]$ and nonpositive
if $x\in [\pi/2,3\pi/2]$.
\item[(ii)] If $m$ is even,  then $2F(x)-F(x-\frac{m\pi}{m+1})
-F(x-\frac{(m+2)\pi}{m+1})$ is nonnegative if $x\in [-\frac{\pi}{2}
+\frac{\pi}{m+1},\frac{\pi}{2}-\frac{\pi}{m+1}]$
and nonpositive if $x\in [\frac{\pi}{2}+\frac{\pi}{m+1},
\frac{3\pi}{2}-\frac{\pi}{m+1}]$.
\end{itemize}
\end{lemma}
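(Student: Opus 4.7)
The plan is to exploit the explicit product formula $F(x) = \frac{\sin^2((m+1)x/2)}{(m+1)\sin^2(x/2)}$ together with the parity of $m+1$ to reduce each claim to a one-variable trigonometric inequality whose sign is transparent.

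For part (i), with $m+1$ even, $\sin^2((m+1)(x-\pi)/2) = \sin^2((m+1)x/2)$ and $\sin^2((x-\pi)/2) = \cos^2(x/2)$, so factoring out the common numerator gives
\begin{equation*}
F(x) - F(x-\pi) = \frac{\sin^2((m+1)x/2)}{m+1}\bigl(\csc^2(x/2) - \sec^2(x/2)\bigr) = \frac{4\sin^2((m+1)x/2)\,\cos x}{(m+1)\sin^2 x}.
\end{equation*}
This is a nonnegative multiple of $\cos x$, so its sign agrees with that of $\cos x$: nonnegative on $[-\pi/2,\pi/2]$ and nonpositive on $[\pi/2,3\pi/2]$. (The isolated points where $\sin x = 0$ are handled by direct evaluation or continuity.)

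For part (ii), set $h = \pi/(m+1)$, so $(m+1)h/2 = \pi/2$. Since $m+1$ is odd, $(m+1)(x-\pi\pm h)/2$ differs from $(m+1)x/2$ by an integer multiple of $\pi$, hence $\sin^2((m+1)(x-\pi\pm h)/2) = \sin^2((m+1)x/2)$, while $\sin^2((x-\pi\pm h)/2) = \cos^2((x\pm h)/2)$. With $u = x/2$ and $v = h/2 = \pi/(2(m+1))$ this gives
\begin{equation*}
2F(x) - F(x-\pi+h) - F(x-\pi-h) = \frac{\sin^2((m+1)x/2)}{m+1}\,\Phi(u),
\end{equation*}
where $\Phi(u) \coloneqq \frac{2}{\sin^2 u} - \frac{1}{\cos^2(u+v)} - \frac{1}{\cos^2(u-v)}$. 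Setting $a = \cos(2u)$ and $b = \cos(2v)$, the identities $2/\sin^2 u = 4/(1-a)$, $\cos^2(u+v) + \cos^2(u-v) = 1 + ab$, and $\cos(u+v)\cos(u-v) = (a+b)/2$ lead after routine algebra to the clean form
\begin{equation*}
\Phi(u) = \frac{4(1+b)(a^2 + a + b - 1)}{(1-a)(a+b)^2}.
\end{equation*}

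The two target intervals correspond to disjoint ranges of $a$. For $u \in [-\pi/4+v,\pi/4-v]$, $a \in [\sin(2v),1]$, so $1-a \geq 0$; the polynomial $a^2+a+b-1$ has positive derivative on $[0,1]$ and, at the left endpoint, reduces via $\sin^2+\cos^2=1$ to $\cos(2v)(1-\cos(2v)) + \sin(2v) > 0$, whence $\Phi \geq 0$ and the full expression is nonnegative. For $u \in [\pi/4+v,3\pi/4-v]$, $a \in [-1,-\sin(2v)] \subset [-1,0]$; writing $a^2+a+b-1 = a(a+1) + (b-1)$ displays it as a sum of two nonpositive terms, so $\Phi \leq 0$ and the full expression is nonpositive. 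At the isolated points of either interval where $a+b = 0$ (and $\Phi$ formally blows up), one checks that $\sin^2((m+1)x/2) = 0$ as well, so the product extends continuously and the sign is preserved. The main obstacle is the algebraic reduction producing the clean rational form of $\Phi$; once that is in hand, the sign analysis reduces to inspecting a quadratic on a short interval of $a$.
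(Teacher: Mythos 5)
Your proof is correct, and the crucial reduction is the same as the paper's: in both parts, the parity of $m+1$ (together with $h=\pi/(m+1)$) forces the $\sin^2((m+1)\,\cdot\,/2)$ numerators to coincide, so everything factors as $\frac{\sin^2((m+1)x/2)}{m+1}$ times a bracket involving only $\sin^2$ and $\cos^2$ of $x/2$-shifts. Where you diverge is in the sign analysis of that bracket. For (i) you rewrite $\csc^2(x/2)-\sec^2(x/2)=4\cos x/\sin^2 x$ and read off the sign of $\cos x$; the paper instead just notes $\sin^2(x/2)\le \tfrac12\le\cos^2(x/2)$ on $[-\pi/2,\pi/2]$ (and the reverse on $[\pi/2,3\pi/2]$). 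These are really the same fact, and your version is a touch cleaner. For (ii) the paper again applies the comparison-with-$\tfrac12$ trick termwise: on the first interval $\tfrac{2}{\sin^2(x/2)}\ge 4\ge \tfrac{1}{\cos^2((x+h)/2)}+\tfrac{1}{\cos^2((x-h)/2)}$, and the reverse on the second. You instead push through a genuine algebraic reduction, substituting $a=\cos(2u)$, $b=\cos(2v)$ and factoring the bracket as $\frac{4(1+b)(a^2+a+b-1)}{(1-a)(a+b)^2}$ (I checked the identity, it holds), then analyzing the quadratic $a^2+a+b-1$ on $a\in[\sin 2v,1]$ and $a\in[-1,-\sin 2v]$. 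This costs more work but gives sharper information (it locates the exact sign change in $a$); the paper's inequality argument is less precise but gets the stated result in two lines. Both correctly handle the removable singularities by noting the $\sin^2((m+1)x/2)$ prefactor vanishes there.
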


\begin{proof}
(i) Let $m$ be an odd positive integer. By definition of
the Fej\'er kernel $F(x)$, we have
$$F(x)-F(x-\pi)=\frac{1}{m+1}
\biggl[ \frac{\sin^2 ((m+1)x/2)}{\sin^2 x/2}
-\frac{\sin^2 (\frac{(m+1)x}{2}-\frac{(m+1)\pi}{2})}
{\sin^2 (x-\pi)/2}\biggr].$$
Here $\sin((x-\pi)/2)=-\cos (x/2)$. Since $m$ is odd,
$(m+1)\pi/2$ is an integer multiple of $\pi$, and so the numerators
of the two terms are equal. We deduce that
$$F(x)-F(x-\pi)=\frac{\sin^2((m+1)x/2)}{m+1}
\biggl[ \frac{1}{\sin^2(x/2)}-\frac{1}{\cos^2(x/2)}\biggr].$$
For $x\in [-\pi/2,\pi/2]$, we have $\sin^2 (x/2)\leq 1/2$, while
$\cos^2(x/2)\geq 1/2$.
It follows that $F(x)-F(x-\pi)$
is nonnegative if $x\in [-\pi/2,\pi/2]$.
By applying that result to $x-\pi$ in place of $x$, we also find
that $F(x)-F(x-\pi)$ is nonpositive if $x\in [\pi/2,
3\pi/2]$, as we want.

(ii) Let $m$ be an even positive integer. By definition
of the Fej\'er kernel $F(x)$,
\begin{align*}
{}&\; 2F(x)-F(x-m\pi/(m+1))
-F(x-(m+2)\pi/(m+1))\\
{}=&\; \frac{1}{m+1}\biggl[\frac{2\sin^2 ((m+1)x/2)}{\sin^2 (x/2)}
-\frac{\sin^2 (\frac{(m+1)x}{2}-\frac{m\pi}{2})}{\sin^2 (\frac{x}{2}
-\frac{m\pi}{2m+2})}
-\frac{\sin^2 (\frac{(m+1)x}{2}-\frac{(m+2)\pi}{2})}{\sin^2 (\frac{x}{2}
-\frac{(m+2)\pi}{2m+2})}
\biggr].
\end{align*}
Since $m$ is even, $m\pi/2$ and $(m+2)\pi/2$ are both integer
multiples of $\pi$, and so the three $\sin^2$ terms
in the numerators
are equal. So we can rewrite the expression above as
$$\frac{\sin^2((m+1)x/2)}{m+1}\biggl[\frac{2}{\sin^2 (x/2)}
-\frac{1}{\cos^2(\frac{x}{2}+\frac{\pi}{2m+2})}
-\frac{1}{\cos^2(\frac{x}{2}-\frac{\pi}{2m+2})}\biggr].$$

For $x\in [-\frac{\pi}{2}+\frac{\pi}{m+1},\frac{\pi}{2}-\frac{\pi}{m+1}]$,
we have $\sin^2(x/2)\leq 1/2$,
while $\cos^2(\frac{x}{2}-\frac{\pi}{2m+2})$ and $\cos^2(\frac{x}{2}
+\frac{\pi}{2m+2})$
are both $\geq 1/2$. It follows that the previous paragraph's
expression is nonnegative for this range of $x$.
Likewise, if $x\in [\frac{\pi}{2}+\frac{\pi}{m+1},\frac{3\pi}{2}
-\frac{\pi}{m+1}]$,
then $\sin^2(x/2)\geq 1/2$
while $\cos^2(\frac{x}{2}+\frac{\pi}{2m+2})$ and $\cos^2(\frac{x}{2}
-\frac{\pi}{2m+2})$
are both $\leq 1/2$. It follows that the previous paragraph's
expression is nonpositive for this range of $x$.
Lemma \ref{fejer} is proved.
\end{proof}


\end{document}